\documentclass[11pt]{amsart}
\usepackage[utf8]{inputenc}
\usepackage{amsmath, amssymb, amsthm}
\usepackage[a4paper,margin=3cm]{geometry}
\usepackage{comment}
\usepackage{aliascnt}

\usepackage{paralist}

\theoremstyle{definition}
\newtheorem{theorem}{Theorem}[section]

\newcommand{\newaliasthm}[2]{%
  \newaliascnt{#1}{theorem}%
  \newtheorem{#1}[#1]{#2}%
  \aliascntresetthe{#1}%
}

\newaliasthm{lemma}{Lemma}
\newaliasthm{claim}{Claim}
\newaliasthm{corollary}{Corollary}
\newaliasthm{conjecture}{Conjecture}
\newaliasthm{proposition}{Proposition}
\newaliasthm{question}{Question}
\newaliasthm{problem}{Problem}
\newaliasthm{example}{Example}
\newaliasthm{remark}{Remark}
\newaliasthm{definition}{Definition}
\newaliasthm{observation}{Observation}

\usepackage{xcolor}
\definecolor{mcitecolor}{HTML}{5590B4}

\usepackage{hyperref}
\hypersetup{
linkcolor  = black,
citecolor  = mcitecolor!85!black,
urlcolor   = blue!85!black,
colorlinks = true,
}

\usepackage[
  capitalise,
  noabbrev,
  nameinlink,
  nosort
]{cleveref}

\newcommand{\Dih}{\operatorname{Dih}}
\newcommand{\Cay}{\operatorname{Cay}}
\newcommand{\Sch}{\operatorname{Sch}}
\newcommand{\Z}{\mathbb{Z}}
\newcommand{\R}{\mathbb{R}}

\title[Coloring semiminimal Cayley Graphs]{Coloring semiminimal Cayley Graphs}

\author{Ignacio Garc\'ia-Marco}
\address[IGM]{Facultad de Ciencias and Instituto de Matem\'aticas y Aplicaciones (IMAULL), Universidad de La Laguna, Apartado de Correos 456, 38200 La Laguna, Spain}
\email{iggarcia@ull.edu.es}

\author{Kolja Knauer}
\address[KK]{Universitat de Barcelona,
Departament de Matemàtiques i Informàtica, 08007 Barcelona, Spain}
\email{kolja.knauer@ub.edu}

\author{Giuliamaria Menara}
\address[GM]{Institut de Mathématiques de Jussieu - Paris Rive Gauche, 75252 Paris, France}
\email{giuliamaria.menara@imj-prg.fr}

\begin{document}

\begin{abstract}

In 1978 Babai raised the question whether all semiminimal Cayley graphs have bounded chromatic number. In this paper we show that semiminimal Cayley graphs of abelian and generalized dihedral groups have {circular} {chromatic} number at most 4, thus extending a result of Barajas and Serra. 
\end{abstract}

\maketitle


\section{Introduction}
Given a group $G$ and a \emph{connection set} $C\subseteq G$, the (undirected, right) \emph{Cayley graph} $\Cay(G,C)$ has vertex set $G$, with two distinct elements $a,b\in G$ adjacent whenever
$
a^{-1}b\in C\cup C^{-1}
$.
Thus, throughout the paper, Cayley graphs are regarded as undirected, even when the connection set is not inverse-closed.

A \emph{semiminimal\footnote{also called \emph{quasi-irreducible} in the literature}  family} of $G$ is a family $
C=(c_\alpha)_{\alpha<\iota}
$ of elements of $G$
indexed by an ordinal $\iota$, such that
\[
c_\alpha\notin
\langle c_\beta:\beta<\alpha\rangle
\qquad\text{for every }\alpha<\iota.
\]

Babai~\cite{B78} shows that every group admits a semiminimal generating family. A Cayley graph $\Cay(G,C)$ is \emph{semiminimal} if $C$ is a semiminimal generating family of $G$. An important special case is that of \emph{minimal}, or \emph{irreducible}, Cayley graphs, for which $C$ is an inclusion-minimal generating family of  $G$. Some structural properties of minimal Cayley graphs have been studied~\cite{GMK22,God81,KS25,Spe83}, but much remains to be understood in both the minimal and the semiminimal setting.

A question raised by Babai~\cite{B78,B95} asks whether minimal Cayley graphs, or more generally semiminimal Cayley graphs, have uniformly bounded chromatic number. These questions have recently received renewed attention because of the structural and computational challenges they present. It was proved in~\cite{GMK25} that minimal Cayley graphs of finitely generated nilpotent and of finitely generated generalized dihedral groups have chromatic number at most $3$, whereas $4$ colors are sometimes necessary for soluble groups. This result motivated the construction of the database~\cite{Rhys}, which was used to verify computationally that every minimal Cayley graph of a group of order at most $512$ has chromatic number at most $4$. By contrast, a broad, purely graph-theoretic generalization of Babai's question was answered negatively in~\cite{DHY24}.


For semiminimal Cayley graphs, Barajas and Serra~\cite{barajas2009chromatic} proved that those of cyclic groups have circular chromatic number at most $4$, which is best possible. Here, the circular chromatic number is a continuous parameter whose ceiling upper bounds the chromatic number, see~\cite{Zhu01}. On the other hand, generalized quaternion groups admit semiminimal Cayley graphs of chromatic number $7$, see~\cite{GMK25}. Babai~\cite{B78} showed that every group admits a semiminimal Cayley graph of chromatic number at most countable.

In the present paper, we extend the known positive results. We first prove that every semiminimal Cayley graph of a finitely generated abelian group has circular chromatic number at most $4$, see \Cref{thm:abelian-circular}. 
We then extend the proof method  
to the case of finitely generated generalized dihedral groups, where we also prove the upper bound of $4$ in \Cref{thm:semiminimal-Dih-circular}.

Using compactness arguments for the circular chromatic number, we extend the bound to arbitrary abelian groups and arbitrary generalized dihedral groups, see \Cref{cor:arbitrary-abelian-circular} and \Cref{cor:arbitrary-dihedral-circular}.

Finally, we obtain a uniform bound for the chromatic number of semiminimal Cayley graphs of bounded Jordan constant groups, see \Cref{cor:easy 4k bound for non-Ab}, which can be further improved in the case of semidirect products of abelian groups, see \Cref{thm:abelian-by-abelian-action-bound}.

\subsection{Preliminaries}

Recall that a \emph{(well-ordered) semiminimal family} of a group $G$ is a family $
C=(c_\alpha)_{\alpha<\iota}
$ of elements of $G$
indexed by an ordinal $\iota$, such that
\[
c_\alpha\notin
\langle c_\beta:\beta<\alpha\rangle
\qquad\text{for every }\alpha<\iota.
\]
\begin{observation}\label{obs:semimin}
    Let $
C=(c_\alpha)_{\alpha<\iota}
$ be a semiminimal generating family of $G$, then:
\begin{enumerate}[(i)]
    \item Every finite
subfamily $D$ of $C$, with the inherited order, is a semiminimal 
family of $G$.
    \item Every finite
semiminimal family $D$ of $G$ can be extended to a semiminimal generating 
family of $G$.
\end{enumerate}
\end{observation}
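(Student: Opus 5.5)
For part (i), I will argue directly from the definition. Let $D=(c_{\alpha_1},\dots,c_{\alpha_k})$ with $\alpha_1<\dots<\alpha_k$, ordered as in $C$. For each $j$ we have $\{c_{\alpha_i}: i<j\}\subseteq\{c_\beta:\beta<\alpha_j\}$, so
\[
\langle c_{\alpha_i}: i<j\rangle\subseteq\langle c_\beta:\beta<\alpha_j\rangle .
\]
Since $c_{\alpha_j}$ is not in the larger subgroup, it is not in the smaller one either. Hence $D$ is semiminimal. The same argument works for any subfamily, finite or not. Finiteness is stated only because that is the case used later.

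For part (ii), I will use a transfinite greedy extension. Let $D=(d_1,\dots,d_k)$ be a finite semiminimal family, and well-order $G$ as $(g_\gamma)_{\gamma<\kappa}$. I define the family $(c_\alpha)$ by transfinite recursion. First set $c_i=d_{i+1}$ for $i<k$. At a later stage $\alpha\ge k$, let $H_\alpha=\langle c_\beta:\beta<\alpha\rangle$. If $H_\alpha=G$, the construction stops with $\iota=\alpha$. Otherwise, let $c_\alpha=g_\gamma$, where $\gamma$ is the least index with $g_\gamma\notin H_\alpha$.

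I then check three things.
\begin{itemize}
\item \emph{Semiminimality.} On the initial segment it holds because $D$ is semiminimal. At each later stage it holds by the choice of $c_\alpha$.
\item \emph{Termination.} The $c_\alpha$ are pairwise distinct, since each $c_\alpha$ lies outside $H_\alpha$, which contains all earlier terms. So the recursion must stop before the ordinal exceeds $|G|^+$, and it stops exactly when $H_\iota=G$.
\item \emph{Generation.} At the stopping ordinal $\iota$ we have $\langle c_\alpha:\alpha<\iota\rangle=G$, so the family is generating.
\end{itemize}

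The only point needing care is this termination step: the recursion must be stated so that it stops rather than continuing forever. The distinctness argument above handles it, because any strictly growing injective sequence of elements of $G$ is bounded in length by $|G|$. An alternative would be Zorn's lemma on semiminimal families extending $D$, but that route needs a compatible notion of extension (end-extension) to handle chains. The recursion avoids this issue, so I will use it.
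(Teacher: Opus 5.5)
Your proof is correct and follows essentially the same greedy transfinite recursion as the paper; part (i) is the same immediate subgroup-inclusion argument. The only difference is cosmetic: the paper recurses along a fixed well-ordering of $G\setminus D$, so termination is automatic, whereas you index by the output ordinal and need the injectivity/cardinality bound, which you supply correctly (both constructions in fact produce the same family).
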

\begin{proof}
Since (i) is trivial, we only prove (ii). We extend $D$ to a well-ordered semiminimal generating family of $G$. To this
end, well-order $G\setminus D$ and consider its elements successively.
Proceeding by transfinite recursion, retain an element whenever it does not belong to the subgroup generated by $D$ and the elements retained previously. Denote the resulting family,
preceded by $D$, by $C$.
By construction, $C$ is semiminimal. It also generates $G$, since every
element that was not retained already belonged to the subgroup generated by
the elements considered previously. 
\end{proof}

A \emph{proper coloring} of a graph $X = (V,E)$ is an assignment of colors to its vertices such that adjacent vertices receive distinct colors. The \emph{chromatic number} $\chi(X)$ is the minimum number of colors required for a proper coloring of $X$.
The \emph{circular chromatic number} $\chi_c(X)$ of a graph $X=(V,E)$ is a continuous analogue of the chromatic number introduced in~\cite{Vin88} under the name of \emph{star chromatic number}. It is defined as the infimum of all real numbers $s\geq 1$ for which there exists a map
\[
f:V\longrightarrow \R/\Z
\]
such that, for every edge $\{u,v\}$, the circular distance between $f(u)$ and $f(v)$ is at least $1/s$.
The chromatic number and the circular chromatic number both are \emph{monotone}, i.e., they cannot increase under taking subgraphs. Further, they satisfy
\[
\chi(X)=\left\lceil \chi_c(X)\right\rceil.
\]

Hence, all our results yield upper bounds on the chromatic number. See~\cite{Zhu01} for more information on the circular chromatic number. 

\section{Finitely Generated Abelian Groups}

In~\cite[Theorem 8]{barajas2009chromatic} Barajas and Serra proved that for $A=\Z_n$, $\chi_c\bigl(\Cay(\Z_n, C)\bigr) \le 4$ for every semiminimal family. In this section we extend this result for $A$ a finitely generated abelian group.

\begin{definition}
\label{def:Hi(D) and r(D)}
Let $A$ be an abelian group and $
C=(c_\alpha)_{\alpha<\iota}
$
 a well-ordered family of elements of $A$. For every
$\alpha<\iota$, set
$
H_\alpha:=\langle c_\beta:\beta<\alpha\rangle;
$
the \emph{relative order} of $c_\alpha$ in $C$ is
\[
r_\alpha:=\operatorname{ord}_{A/H_\alpha}(c_\alpha+H_\alpha) \in \Z_{> 0} \cup \{\infty\}.\] 

We write
\[
r(C):=\min_{\alpha < \iota}r_\alpha,
\qquad
r(\varnothing):=\infty,
\]
and call it the \emph{order} of $C$.
\end{definition}

We observe that $C$ is a semiminimal family if and only if $r(C) \geq 2$.

The following lemma isolates the character-extension argument that will be
used both for abelian and generalized dihedral groups.

\begin{lemma}
\label{lem:character-prescribed-arcs}
Let $D=(d_1,\ldots,d_m)$ be a generating family of an
abelian group $A$, and for each $i\in\{1,\ldots,m\}$ let $r_i$ denote
the relative order of $d_i$ in $D$. For each $i$, consider a subset
$I_i\subseteq\mathbb R/\mathbb Z$ defined as follows:
\begin{itemize}
    \item if $r_i\in\mathbb Z_{>0}$, let $I_i$ be a closed
    arc with
    $$
    \operatorname{length}(I_i)\geq\frac1{r_i};
    $$
    \item if $r_i=\infty$, let $I_i$ be a nonempty set.
\end{itemize}
Then there exists a character
$$
\varphi:A\longrightarrow\mathbb R/\mathbb Z
$$
such that $\varphi(d_i)\in I_i$ for every $i\in\{1,\ldots,m\}$.

\end{lemma}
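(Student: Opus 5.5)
We will construct $\varphi$ by induction on $i$, defining it successively on the subgroups $H_i:=\langle d_1,\ldots,d_{i-1}\rangle$ (so $H_1=0$ and $H_{m+1}=A$, since $D$ generates $A$). At stage $i$ we already have a character $\varphi_i:H_i\to\mathbb R/\mathbb Z$ with $\varphi_i(d_j)\in I_j$ for all $j<i$. We want to extend it to a character $\varphi_{i+1}$ of $H_{i+1}=H_i+\langle d_i\rangle$ with $\varphi_{i+1}(d_i)\in I_i$. Taking $\varphi_1=0$ and $\varphi=\varphi_{m+1}$ then proves the lemma.

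The key step is to describe exactly which values $\varphi_{i+1}(d_i)$ are allowed. The set $\{n\in\mathbb Z: nd_i\in H_i\}$ is the subgroup $r_i\mathbb Z$, where we read $r_i\mathbb Z=0$ when $r_i=\infty$.

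\emph{Case $r_i=\infty$.} Here $\langle d_i\rangle\cap H_i=0$, so $H_{i+1}=H_i\oplus\langle d_i\rangle$ with $d_i$ of infinite order. Any value $x\in I_i$ may be prescribed for $d_i$, giving a well-defined homomorphism. This is why $I_i$ only needs to be nonempty.

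\emph{Case $r_i$ finite.} Put $h:=r_id_i\in H_i$ and $y:=\varphi_i(h)$. A value $x$ extends $\varphi_i$ to $H_{i+1}$ exactly when $r_ix=y$. Indeed, the map $H_i\times\mathbb Z\to\mathbb R/\mathbb Z$, $(g,n)\mapsto\varphi_i(g)+nx$, then vanishes on the kernel of $(g,n)\mapsto g+nd_i$. That kernel consists of the pairs with $nd_i=-g\in H_i$, so $n=kr_i$ and $g=-kh$, and the map gives $-k y+kr_ix=0$.

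The solutions of $r_ix=y$ in $\mathbb R/\mathbb Z$ form a coset of $\frac1{r_i}\mathbb Z/\mathbb Z$: there are $r_i$ points, equally spaced at distance $1/r_i$. A closed arc of length at least $1/r_i$ must contain one of them. Choose such an $x\in I_i$ and define $\varphi_{i+1}$ accordingly.

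The main point needing care is the well-definedness of the extension in the finite case, namely the kernel computation above. The pigeonhole step on the circle is immediate, provided we interpret the length of a closed arc as at least $1/r_i$, so that it contains a full period of the equally spaced solution set. If $I_i$ has length $\ge 1$ the claim is trivial. Note that we never need injectivity or divisibility properties of $\mathbb R/\mathbb Z$ beyond the solvability of $r_ix=y$, which holds since $\mathbb R/\mathbb Z$ is divisible.
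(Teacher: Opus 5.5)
Your proposal is correct and follows essentially the same route as the paper. Both build the character by inductive extension along the chain $\langle d_1,\dots,d_{i-1}\rangle$: when $r_i=\infty$ the value at $d_i$ is chosen freely (direct sum), and when $r_i$ is finite one picks, inside the closed arc $I_i$, one of the $r_i$ equally spaced solutions of $r_i x=\varphi(r_i d_i)$. Your kernel computation for well-definedness is just a repackaging of the paper's check that $h+kd_i=h'+k'd_i$ forces equal values.
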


\begin{proof}
Set
$
H_i(D):=\langle d_1,\ldots,d_{i}\rangle
$
for every $i\in\{1,\ldots,m+1\}$. 
We construct compatible characters
$$
\varphi_i:H_i(D)\longrightarrow\mathbb R/\mathbb Z
$$
inductively. For $H_0(D)=\{0\}$, take the trivial character. Suppose that
$\varphi_{i-1}$ has been constructed. 

If $r_i = \operatorname{ord}_{A/H_{i-1}(D)}(d_i+H_{i-1}(D))  <\infty$, then an extension to
$H_i(D)$ is determined by a value $y:=\varphi_i(d_i)$ satisfying
$$
r_i y=\varphi_{i-1}(r_i d_i).
$$
This equation has exactly $r_i$ solutions in $\mathbb R/\mathbb Z$, equally
spaced by $1/r_i$. Hence, the closed arc $I_i$ contains one of
these solutions. 
Choose such a solution $y$ in $I_i$ and define
$$
\varphi_i(h+kd_i):=\varphi_{i-1}(h)+ky
$$
for $h\in H_{i-1}(D)$ and $k\in\mathbb Z$. 
This is well-defined. Indeed, if
$$
h+kd_i=h'+k'd_i,
$$
then $(k-k')d_i\in H_{i-1}(D)$. Since $d_i+H_{i-1}(D)$ has order $r_i$ in
$A/H_{i-1}(D)$, we have $r_i\mid(k-k')$. Writing $k-k'=qr_i$ and using
$$
h'-h=(k-k')d_i=qr_i d_i,
$$
we obtain
\begin{align*}
\varphi_{i-1}(h')-
\varphi_{i-1}(h)
&=q\varphi_{i-1}(r_i d_i)\\
&=qr_i y\\
&=(k-k')y,
\end{align*}
which gives
$$
\varphi_{i-1}(h)+ky=\varphi_{i-1}(h')+k'y.
$$

If $r_i=\infty$, then no nonzero multiple of $d_i$ belongs to $H_{i-1}(D)$.
Hence
$$
H_i(D)=H_{i-1}(D)\oplus\langle d_i\rangle,
$$
and we may choose $\varphi_i(d_i)$ as any element of $I_i$.

At the end we obtain a character $\varphi=\varphi_m:A\to\mathbb R/\mathbb Z$
with the desired properties.
\end{proof}

\begin{theorem}
\label{thm:abelian-circular}
Let $A$ be an abelian group and let $C=(c_1,\ldots,c_m)$ be
a semiminimal family. If $r := r(C)$ is the order of $C$, then
$$
\chi_c(\Cay(A,C))\leq
\begin{cases}
2,&\text{if }r = \infty,\\[1mm]
\dfrac{2r}{r-1},&\text{if }r \in\mathbb N.
\end{cases}
$$
\end{theorem}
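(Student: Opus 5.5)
We will build the circular coloring directly from a single character supplied by \Cref{lem:character-prescribed-arcs}. Since $\chi_c$ is monotone and $\Cay(A,C)$ is a subgraph of $\Cay(A,C)$ regarded inside $A$, we may restrict to the subgroup $A':=\langle c_1,\ldots,c_m\rangle$. Every connected component of $\Cay(A,C)$ is a translate of $\Cay(A',C)$, so it suffices to color $\Cay(A',C)$. In $A'$ the family $D=C$ is generating. Its relative orders $r_i$ agree with those in $A$, because $H_{i}$ lies in $A'$ and the order of $c_i+H_i$ in $A/H_i$ equals its order in $A'/H_i$. By hypothesis every $r_i\ge r$. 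For $r=1$ the bound is vacuous, so we assume $r\ge 2$.

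Given a character $\varphi:A'\to\R/\Z$, we color each vertex $a$ by $f(a):=\varphi(a)$. Two adjacent vertices $a,b$ satisfy $b-a=\pm c_i$, so the circular distance between $f(a)$ and $f(b)$ equals the circular distance of $\varphi(c_i)$ from $0$. Hence it suffices to find $\varphi$ with $\varphi(c_i)\in J:=[\delta,1-\delta]$ for all $i$, where $\delta=\frac{r-1}{2r}$ when $r$ is finite and $\delta=\tfrac12$ when $r=\infty$. This gives $\chi_c\le 1/\delta$, which is $2r/(r-1)$ or $2$ respectively.

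To apply \Cref{lem:character-prescribed-arcs}, we need arcs $I_i\subseteq J$ with the required lengths.
\begin{itemize}
\item If $r_i=\infty$, take $I_i=\{1/2\}\subseteq J$.
\item If $r_i$ is finite, take $I_i:=J$ when $r<\infty$. This is a closed arc of length $1-2\delta=1/r\ge 1/r_i$, as needed.
\end{itemize}
When $r=\infty$ all the $r_i$ are infinite, so the lemma yields a character with $\varphi(c_i)=1/2$ for every $i$. That is a proper circular coloring with $s=2$, i.e.\ a bipartition.

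The main point to check is that the lemma applies to a finite family whose elements may have infinite relative order mixed with finite ones; the lemma is stated in exactly that generality. We also need the restriction to $A'$ to be harmless. This holds because the components are cosets of $A'$, so the coloring $a\mapsto \varphi(a-t)$ on each coset $t+A'$, for a fixed representative $t$, is proper.

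A minor subtlety arises if some $c_i=0$. In that case the element is not a new generator, so $r_i=1$, which is excluded by $r\ge2$. Otherwise the edges are genuine, and the circular distance is at least $\delta$ on each of them. Taking the infimum, $\chi_c\le 1/\delta$, which gives the stated bound.
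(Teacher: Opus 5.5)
Your proposal is correct and follows the paper's proof essentially verbatim: you restrict to the subgroup generated by $C$ and note that your arc $[\delta,1-\delta]$ is exactly the paper's arc $\left[\frac12-\frac1{2r},\frac12+\frac1{2r}\right]$ of length $1/r\ge 1/r_i$. You then apply \Cref{lem:character-prescribed-arcs} to get a character sending each $c_i$ into that arc (or to $\frac12$ when $r=\infty$), and your remark that relative orders computed in $\langle C\rangle$ agree with those in $A$ makes explicit a point the paper uses implicitly.
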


\begin{proof}
We first observe that all the connected components of $\Cay(A,C)$ are isomorphic to $\Cay(B,C)$, where $B = \langle c_1,\ldots,c_m\rangle$ is the group spanned by the elements of $C$. Hence $\chi_c(\Cay(A,C)) = \chi_c(\Cay(B,C))$.

Assume first that $r = \infty$, then the relative order of $c_i$ in $C$ is $r_i = \infty$. We set $I = \{\frac{1}{2}\}$. By
\Cref{lem:character-prescribed-arcs}, there exists a character
$\varphi:B\to\mathbb R/\mathbb Z$ such that $\varphi(c_i) \in I$. Consequently, if $x$ and $y$ are adjacent in $\Cay(B,C)$, then $y-x=\pm c_i$ for some $i$,
and hence
$$
\|\varphi(y)-\varphi(x)\|=\|\varphi(c_i)\| = \frac12.
$$
Therefore
$$
\chi_c(\Cay(B,C))\leq 2.
$$

Assume now that $r \in \mathbb N$, then the relative order of $c_i$ in $C$ is $r_i \geq r$. Also, since $C$ is semiminimal, we have that $r \geq 2$. We set $I$ the closed arc 
$$
I:=\left[\frac12-\frac1{2r},\frac12+\frac1{2r}\right]
\subseteq\mathbb R/\mathbb Z.
$$
of length $1/r \geq 1/r_i$. By
\Cref{lem:character-prescribed-arcs}, there exists a character
$\varphi:B\to\mathbb R/\mathbb Z$ such that $\varphi(c_i) \in I$. Consequently, 
if $x$ and $y$ are adjacent in $\Cay(B,C)$, then $y-x=\pm c_i$ for some $i$,
and hence
$$
\|\varphi(y)-\varphi(x)\|=\|\varphi(c_i)\|\geq\frac12-\frac1{2r} = \frac{r-1}{2r}.
$$
Therefore
$$
\chi_c(\Cay(B,C))\leq\frac{2r}{r-1}.
$$
In both cases, combining the bound obtained for $\chi_c(\Cay(B,C))$
with the equality $\chi_c(\Cay(A,C))=\chi_c(\Cay(B,C))$, established
above, gives the claimed bound for $\chi_c(\Cay(A,C))$.
\end{proof}

\section{Finitely Generated Generalized Dihedral Groups}

Let $A$ be an abelian group. The generalized dihedral group associated to
$A$ is the semidirect product
$$
\Dih(A)=A\rtimes C_2,
$$
where the nontrivial element of $C_2$ acts on $A$ by inversion. We write
elements of $\Dih(A)$ as pairs $(a,\varepsilon)$, with $a\in A$ and
$\varepsilon\in\{0,1\}$, and multiplication
$$
(a,\varepsilon)(b,\delta)
=\bigl(a+(-1)^\varepsilon b,\varepsilon+\delta\bigr),
$$
where the second coordinate is taken modulo $2$. The elements of
$A\times\{0\}$ are called \emph{rotations}, while the elements of $A\times\{1\}$
are called \emph{reflections}.

\begin{lemma}
\label{lem:normalize-reflection}
Let $A$ be an abelian group and let $\tau=(a,1)\in\Dih(A)$ be a reflection.
Then the map
$$
\alpha_a:\Dih(A)\longrightarrow\Dih(A),\qquad
\alpha_a(x,\varepsilon):=(x+\varepsilon a,\varepsilon),
$$
is a group automorphism with $\alpha_a(0,1)=\tau$.
\end{lemma}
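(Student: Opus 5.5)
The plan is to check the three required properties directly from the multiplication rule
$$
(a,\varepsilon)(b,\delta)=\bigl(a+(-1)^\varepsilon b,\varepsilon+\delta\bigr):
$$
that $\alpha_a$ is a bijection, that it is a homomorphism, and that $\alpha_a(0,1)=\tau$.

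The last property is immediate, since
$$
\alpha_a(0,1)=(0+a,1)=(a,1)=\tau.
$$

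For bijectivity, observe that $\alpha_{-a}$ is a two-sided inverse of $\alpha_a$. Indeed,
$$
\alpha_{-a}(\alpha_a(x,\varepsilon))=(x+\varepsilon a-\varepsilon a,\varepsilon)=(x,\varepsilon),
$$
and the composition in the other order is the same computation with $a$ replaced by $-a$.

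For the homomorphism property, take $(x,\varepsilon),(y,\delta)\in\Dih(A)$. On one side,
$$
\alpha_a\bigl((x,\varepsilon)(y,\delta)\bigr)=\bigl(x+(-1)^\varepsilon y+[\varepsilon+\delta]a,\ \varepsilon+\delta\bigr),
$$
where $[\varepsilon+\delta]\in\{0,1\}$ denotes the residue of $\varepsilon+\delta$ modulo $2$. On the other side,
$$
\alpha_a(x,\varepsilon)\,\alpha_a(y,\delta)=(x+\varepsilon a,\varepsilon)(y+\delta a,\delta)=\bigl(x+\varepsilon a+(-1)^\varepsilon(y+\delta a),\ \varepsilon+\delta\bigr).
$$
It therefore remains to verify, for $\varepsilon,\delta\in\{0,1\}$, the identity
$$
\varepsilon a+(-1)^\varepsilon\delta a=[\varepsilon+\delta]a.
$$

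This identity is the only point requiring care, because the exponent $\varepsilon+\delta$ is reduced modulo $2$ while the coefficients on the left are ordinary integers. I will check it over the four cases.
\begin{itemize}
\item If $(\varepsilon,\delta)=(0,0)$, both sides equal $0$.
\item If $(\varepsilon,\delta)=(0,1)$, the left side is $0+a=a$ and the right side is $[1]a=a$.
\item If $(\varepsilon,\delta)=(1,0)$, the left side is $a+0=a$ and the right side is $[1]a=a$.
\item If $(\varepsilon,\delta)=(1,1)$, the left side is $a-a=0$ and the right side is $[0]a=0$.
\end{itemize}
The identity holds in all four cases, so $\alpha_a$ is a homomorphism. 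Combined with bijectivity, $\alpha_a$ is an automorphism of $\Dih(A)$ sending $(0,1)$ to $\tau$.
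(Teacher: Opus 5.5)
Your proposal is correct and takes essentially the same route as the paper: a direct computation that reduces the homomorphism property to the identity $\overline{\varepsilon+\delta}=\varepsilon+(-1)^\varepsilon\delta$ for $\varepsilon,\delta\in\{0,1\}$, with $\alpha_{-a}$ as the inverse. The only difference is that you check this identity explicitly over the four cases, whereas the paper simply asserts it.
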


\begin{proof}
Let $\overline{\varepsilon+\delta}$ denote the sum of
$\varepsilon,\delta\in\{0,1\}$ modulo $2$. A direct computation gives
\begin{align*}
\alpha_a\bigl((x,\varepsilon)(y,\delta)\bigr)
&=\bigl(x+(-1)^\varepsilon y+
\overline{\varepsilon+\delta}\,a,
\overline{\varepsilon+\delta}\bigr),\\
\alpha_a(x,\varepsilon)\alpha_a(y,\delta)
&=\bigl(x+(-1)^\varepsilon y+
(\varepsilon+(-1)^\varepsilon\delta)a,
\overline{\varepsilon+\delta}\bigr).
\end{align*}
These expressions agree because
$$
\overline{\varepsilon+\delta}
=\varepsilon+(-1)^\varepsilon\delta
$$
for $\varepsilon,\delta\in\{0,1\}$. Thus $\alpha_a$ is a homomorphism. Its
inverse is $\alpha_{-a}$. Finally, $\alpha_a(0,1)=(a,1)=\tau$.
\end{proof}

\begin{lemma}
\label{lem:projected-semiminimal}
Let $A$ be an abelian group and $C=(c_1,\ldots,c_m)$ a semiminimal family of~$\Dih(A)$,
where $c_i=(d_i,\varepsilon_i)$ for every $i\in\{1,\ldots,m\}$. Suppose $c_s=(0,1)$ is the first reflection
occurring in $C$.
Set
$$
D:=(d_i)_{i\neq s},
$$
with the order inherited from $C$, and write $B:=\langle d_i : i\neq s\rangle\leq A$
for the subgroup it generates. Then $D$ is a semiminimal family of
$A$, and
$$
\Dih(B)=\langle c_1,\ldots,c_m\rangle.
$$

\end{lemma}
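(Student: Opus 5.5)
The plan is to describe explicitly, for every $j$, the subgroup $G_j:=\langle c_1,\ldots,c_j\rangle$ of $\Dih(A)$ in terms of the subgroup $B_j:=\langle d_i : i\le j,\ i\neq s\rangle\le A$. Semiminimality of $D$ and the identity $\Dih(B)=\langle c_1,\ldots,c_m\rangle$ will then both follow from this description. Throughout I write $\tau:=c_s=(0,1)$ and use two elementary identities from the multiplication rule:
$$
(d,0)\tau=(d,1),
\qquad
\tau(d,0)\tau=(-d,0).
$$
Since $\tau^2=(0,0)$, the first identity also gives $(d,1)\tau=(d,0)$. I also regard a subgroup $B'\le A$ as the subgroup $B'\times\{0\}$ of rotations, and view $\Dih(B')=B'\times\{0,1\}$ as a subgroup of $\Dih(A)$.

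First, consider $j<s$. All of $c_1,\ldots,c_j$ are rotations, because $c_s$ is the first reflection. The rotations form a subgroup isomorphic to $A$ via $(x,0)\mapsto x$, so $G_j=B_j\times\{0\}$.

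Second, consider $j\ge s$. I claim that $G_j=\Dih(B_j)$.
\begin{itemize}
\item For $\supseteq$: the group $G_j$ contains $\tau$. For each $i\le j$ with $i\neq s$, it also contains $(d_i,0)$, which equals $c_i$ if $c_i$ is a rotation and equals $c_i\tau$ if $c_i$ is a reflection. Hence $G_j\supseteq B_j\times\{0\}$, and multiplying by $\tau$ gives $G_j\supseteq B_j\times\{1\}$.
\item For $\subseteq$: the set $\Dih(B_j)$ is closed under the multiplication rule because $B_j$ is closed under negation, so it is a subgroup. It contains every $c_i$ with $i\le j$, namely $(d_i,\varepsilon_i)$ with $d_i\in B_j$ for $i\neq s$, together with $\tau$.
\end{itemize}
Taking $j=m$ yields $\langle c_1,\ldots,c_m\rangle=\Dih(B)$.

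Semiminimality of $D$ means that $d_i\notin B_{i-1}$ for every $i\neq s$, where $B_{i-1}$ is exactly the subgroup generated by the predecessors of $d_i$ in $D$. I argue by contradiction in two cases.
\begin{itemize}
\item If $i<s$ and $d_i\in B_{i-1}$, then $c_i=(d_i,0)\in B_{i-1}\times\{0\}=G_{i-1}$. This contradicts semiminimality of $C$.
\item If $i>s$ and $d_i\in B_{i-1}$, then $i-1\ge s$, so $G_{i-1}=\Dih(B_{i-1})$. This group contains both $(d_i,0)$ and $(d_i,1)$, hence contains $c_i$ whatever $\varepsilon_i$ is. Again this is a contradiction.
\end{itemize}
Thus $D$ is semiminimal in $A$.

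The only step requiring care is the identification $G_j=\Dih(B_j)$ for $j\ge s$. Everything hinges on $\tau$ being exactly $(0,1)$, so that multiplying by $\tau$ toggles between $(d,0)$ and $(d,1)$ without shifting $d$; this is what the normalization in \Cref{lem:normalize-reflection} buys. The rest is bookkeeping.
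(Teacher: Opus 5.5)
Your proof is correct and follows essentially the same route as the paper: both use that $\tau=(0,1)$ lets you pass between $(d_i,0)$ and $(d_i,1)=(d_i,0)\tau$, split the semiminimality check into the cases $i<s$ and $i>s$, and prove $\Dih(B)=\langle c_1,\ldots,c_m\rangle$ by the same two inclusions. The only difference is organizational. You compute every prefix subgroup $G_j$ exactly, as $B_j\times\{0\}$ for $j<s$ and $\Dih(B_j)$ for $j\ge s$, and derive both claims from that; the paper uses only the containment needed for the contradiction and proves the equality just for the full family.
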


\begin{proof}
For every $i\neq s$, let
$$
B_i:=\langle d_j:j<i,\ j\neq s\rangle.
$$
We first show that $d_i\notin B_i$ for every $i\neq s$.

Suppose first that $i<s$. By the choice of $s$, all the elements $c_j$ with
$j\leq i$ are rotations. Hence $c_j=(d_j,0)$ for every $j\leq i$. If
$d_i\in B_i$, then
$$
c_i=(d_i,0)\in\langle c_j:j<i\rangle,
$$
contrary to the semiminimality of $C$.

Now suppose that $i>s$. Then $\tau:=c_s=(0,1)$ belongs to the subgroup
generated by the predecessors of $c_i$. Moreover, for every $j<i$ with
$j\neq s$, the rotation $(d_j,0)$ also belongs to this subgroup. Indeed, if
$c_j$ is a rotation, then $(d_j,0)=c_j$, while if $c_j$ is a reflection,
then $(d_j,0)=c_j\tau$. Consequently, if $d_i\in B_i$, then
$$
(d_i,0)\in\langle c_j:j<i\rangle.
$$
If $c_i$ is a rotation, this directly implies that $c_i$ belongs to the
subgroup generated by its predecessors. If $c_i$ is a reflection, then
$$
c_i=(d_i,1)=(d_i,0)\tau
$$
also belongs to that subgroup. Both conclusions contradict the
semiminimality of $C$. Thus $D$ is semiminimal.

Finally, we verify $\Dih(B)=\langle c_1,\ldots,c_m\rangle=:\Gamma$.
The computation above shows that $(d_i,0)\in\Gamma$ for every
$i\neq s$ (namely $(d_i,0)=c_i$ if $c_i$ is a rotation, and
$(d_i,0)=c_i\tau$ if $c_i$ is a reflection). Since $\Gamma$ is a
subgroup, the elements $(d_i,0)$, $i\neq s$, together generate
$B\times\{0\}$ inside $\Gamma$, and $\tau=(0,1)\in\Gamma$; hence
$\Dih(B)=(B\times\{0\})\cup(B\times\{0\})\tau\subseteq\Gamma$.
Conversely, $\tau\in\Dih(B)$ trivially, and $c_i=(d_i,\varepsilon_i)
\in\Dih(B)$ for every $i\neq s$ since $d_i\in B$ by definition of
$B$; as these elements generate $\Gamma$, we get
$\Gamma\subseteq\Dih(B)$. Combining both inclusions gives
$\Dih(B)=\Gamma$.
\end{proof}

We can now prove the main result of this section.

\begin{theorem}
\label{thm:semiminimal-Dih-circular}
Let $A$ be an abelian group and $C=(c_1,\ldots,c_m)$ 
a semiminimal family of~$\Dih(A)$. Write
$c_i=(a_i,\varepsilon_i)$, and let $s$ be the first index such that
$\varepsilon_s=1$. 
Define
$$
D:=(d_i)_{i\neq s}, \text{ where }
d_i:=a_i-\varepsilon_i a_s.
$$
 If $r:=r(D)$ is the order of $D$, then
$$
\chi_c(\Cay(\Dih(A),C))\leq
\begin{cases}
2,&\text{if }r=\infty,\\[1mm]
\dfrac{2r}{r-1},&\text{if }r\in\mathbb N.
\end{cases}
$$
\end{theorem}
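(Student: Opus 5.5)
First reduce to the normalized situation. Apply the automorphism $\alpha_{a_s}^{-1}=\alpha_{-a_s}$ of \Cref{lem:normalize-reflection}: it maps $(x,\varepsilon)$ to $(x-\varepsilon a_s,\varepsilon)$. So it sends $c_s$ to $(0,1)$ and each $c_i$ to $(d_i,\varepsilon_i)$ with $d_i=a_i-\varepsilon_i a_s$. A group automorphism maps semiminimal families to semiminimal families and induces a graph isomorphism $\Cay(\Dih(A),C)\cong\Cay(\Dih(A),\alpha_{-a_s}(C))$. Hence we may assume $a_s=0$ and $c_i=(d_i,\varepsilon_i)$. By \Cref{lem:projected-semiminimal}, $D$ is then a semiminimal family of $A$, and $\langle C\rangle=\Dih(B)$ with $B=\langle D\rangle$. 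As in the abelian case, every component of the Cayley graph is isomorphic to $\Cay(\Dih(B),C)$, so it suffices to colour that graph.

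Next, apply \Cref{lem:character-prescribed-arcs} to $D$ as a generating family of $B$, with arc $I=\left[\frac12-\frac1{2r},\frac12+\frac1{2r}\right]$ when $r<\infty$ and $I=\{\frac12\}$ when $r=\infty$. Each $d_i$ has relative order $r_i\ge r$, so this yields a character $\varphi:B\to\R/\Z$ with $\|\varphi(d_i)\|\ge\frac{r-1}{2r}$, respectively $=\frac12$. Define the colouring
$$f(x,\varepsilon):=(-1)^\varepsilon\varphi(x)\in\R/\Z.$$

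Now check the edges, which is the key computation. Right multiplication gives $(x,\varepsilon)(d,\delta)=(x+(-1)^\varepsilon d,\varepsilon+\delta)$.

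For a rotation generator $(d_i,0)$, the endpoints are $(x,\varepsilon)$ and $(x+(-1)^\varepsilon d_i,\varepsilon)$. Their colours differ by $(-1)^\varepsilon\cdot(-1)^\varepsilon\varphi(d_i)=\varphi(d_i)$, which is far from $0$.

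For a reflection generator $(d_i,1)$, the endpoints are $(x,\varepsilon)$ and $(x+(-1)^\varepsilon d_i,\varepsilon+1)$. Their colours are $(-1)^\varepsilon\varphi(x)$ and $-(-1)^\varepsilon\varphi(x)-\varphi(d_i)$. The difference is $2(-1)^\varepsilon\varphi(x)+\varphi(d_i)$, which is not controlled. This is the main obstacle.

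The fix I would try first is to use instead
$$f(x,\varepsilon):=\varphi(x)+\varepsilon\theta$$
for a shift $\theta$ to be chosen. Then rotation edges give difference $\pm\varphi(d_i)$, which is fine. Reflection edges give difference $\pm\varphi(d_i)+\theta$, with the sign depending on $\varepsilon$. Choosing $\theta=\frac12$ helps if $\varphi(d_i)$ is near $0$ on reflections, but then the norm is near $\frac12$ regardless of sign.

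So the plan is to handle reflections by prescribing a different arc. For indices $i\neq s$ with $\varepsilon_i=1$, demand $\varphi(d_i)\in\left[-\frac1{2r},\frac1{2r}\right]$, which is an arc of length $\frac1r\ge\frac1{r_i}$, or $\{0\}$ if $r=\infty$. For rotations keep the arc $I$ around $\frac12$. \Cref{lem:character-prescribed-arcs} allows a different arc for each index, so this is legitimate.

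Set $\theta=\frac12$. Then:
\begin{itemize}
\item Rotation edges have $\|\pm\varphi(d_i)\|\ge\frac{r-1}{2r}$.
\item Reflection edges have $\|\frac12\pm\varphi(d_i)\|\ge\frac12-\frac1{2r}=\frac{r-1}{2r}$.
\item The edge for $c_s=(0,1)$ joins $(x,\varepsilon)$ to $(x,\varepsilon+1)$, with colour difference exactly $\frac12$.
\end{itemize}
Therefore every edge has circular distance at least $\frac{r-1}{2r}$, or exactly $\frac12$ when $r=\infty$. This gives $\chi_c\le\frac{2r}{r-1}$, respectively $\chi_c\le 2$.
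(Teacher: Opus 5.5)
Your proposal is correct and, once you drop the first attempt $f(x,\varepsilon)=(-1)^\varepsilon\varphi(x)$, it is the paper's proof. You normalize via $\alpha_{-a_s}$, use \Cref{lem:projected-semiminimal} to get $\langle C\rangle=\Dih(B)$, apply \Cref{lem:character-prescribed-arcs} with arcs centered at $\frac12$ for rotations and at $0$ for reflections, and color by $\varphi(b)+\frac{\varepsilon}{2}$, with the same three edge checks.
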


\begin{proof}
Recall that $c_s=(a_s,1)$ denotes the first reflection in $C$. By
\Cref{lem:normalize-reflection}, the automorphism $\alpha_{-a_s}$
induces an isomorphism
$$
\Cay(\Dih(A),C)
\cong
\Cay\bigl(\Dih(A),\alpha_{-a_s}(C)\bigr).
$$
Replacing $C$ by its image and retaining the same notation, we may
assume that $c_s=(0,1)$; under this normalization, the first
coordinate of $c_i$ becomes $a_i-\varepsilon_i a_s=d_i$, matching the
family $D$ defined above.

Set $\Gamma:=\langle c_1,\ldots,c_m\rangle$. Every connected component
of $\Cay(\Dih(A),C)$ is isomorphic to $\Cay(\Gamma,C)$, and hence
$$
\chi_c(\Cay(\Dih(A),C))=\chi_c(\Cay(\Gamma,C)).
$$
By \Cref{lem:projected-semiminimal}, $D=(d_i)_{i\neq s}$ is a
semiminimal family of $A$. Moreover, setting
$B:=\langle d_i : i\neq s\rangle$, the same lemma gives
$\Dih(B)=\Gamma$.

Assume first that $r\in\mathbb N$. For each $i\neq s$, define an arc
$I_i\subseteq\mathbb R/\mathbb Z$ by
$$
I_i:=
\begin{cases}
I_R :=\left[\frac12-\frac1{2r},\frac12+\frac1{2r}\right],&\text{if }c_i\text{ is a rotation},\\
I_F:=\left[-\frac1{2r},\frac1{2r}\right],&\text{if }c_i\text{ is a reflection}.
\end{cases}
$$
Both $I_R$ and $I_F$ have length $1/r$. Writing $r_i$ for the
relative order of $d_i$ in $D$, we have $r_i\geq r$ for every
$i\neq s$, so \Cref{lem:character-prescribed-arcs} gives a character
$$
\varphi:B\longrightarrow\mathbb R/\mathbb Z
$$
with $\varphi(d_i)\in I_i$ for every $i\neq s$. Define
$$
\Psi:\Dih(B)\longrightarrow\mathbb R/\mathbb Z,
\qquad
\Psi(b,\varepsilon):=\varphi(b)+\frac{\varepsilon}{2}.
$$

We show that adjacent vertices are assigned values at circular
distance at least $(r-1)/(2r)$. Every edge can, after possibly
exchanging its endpoints, be written as $\{x,xc_i\}$ for some
$x\in\Dih(B)$ and $c_i\in C$; write $x=(b,\varepsilon)$.

If $c_i=(d_i,0)$ is a rotation generator, then
$$
\Psi(xc_i)-\Psi(x)=(-1)^\varepsilon\varphi(d_i).
$$
Since $\varphi(d_i)\in I_R$,
$$
\|\Psi(xc_i)-\Psi(x)\|=\|\varphi(d_i)\|\geq\frac{r-1}{2r}.
$$

If $c_i=c_s=(0,1)$, then
$$
\|\Psi(xc_i)-\Psi(x)\|=\frac12\geq\frac{r-1}{2r}.
$$

Finally, let $c_i=(d_i,1)\neq c_s$ be a reflection generator. Then, in
$\mathbb R/\mathbb Z$,
$$
\Psi(xc_i)-\Psi(x)=(-1)^\varepsilon\varphi(d_i)+\frac12.
$$
Since $\varphi(d_i)\in I_F$, we obtain
$(-1)^\varepsilon\varphi(d_i)+\frac12\in I_R$, so this edge also moves
by circular distance at least $(r-1)/(2r)$.

Consequently,
$$
\chi_c(\Cay(\Dih(B),C))\leq\frac{2r}{r-1}.
$$

Suppose now that $r=\infty$. For each $i\neq s$, define
$I_i:=\{1/2\}$ if $c_i$ is a rotation, and $I_i:=\{0\}$ if $c_i$ is a
reflection. Since every $r_i$ is infinite, \Cref{lem:character-prescribed-arcs} yields a character
$\varphi:B\to\mathbb R/\mathbb Z$ with $\varphi(d_i)\in I_i$ for every
$i\neq s$; define $\Psi$ as above. Every rotation generator changes
the value of $\Psi$ by $\pm1/2$, and every reflection generator
changes it by $1/2$. Hence every edge has endpoints whose images are
at circular distance $1/2$, so
$$
\chi_c(\Cay(\Dih(B),C))\leq2.
$$

In both cases, combining the bound obtained for
$\chi_c(\Cay(\Dih(B),C))$ with the equality
$\chi_c(\Cay(\Dih(A),C))=\chi_c(\Cay(\Gamma,C))$, established above,
gives the claimed bound for $\chi_c(\Cay(\Dih(A),C))$.
\end{proof}

\section{Locality of finite chromatic bounds}

The following compactness result is the circular analogue of the
de Bruijn--Erd\H{o}s theorem~\cite{deBruijnErdos1951}. 

\begin{lemma}
\label{lem:circular-compactness}
Let $X$ be a graph and let $M\geq 1$ be a real number. Then $X$ admits a
circular $M$-coloring if and only if every finite subgraph of $X$ admits a
circular $M$-coloring.
Consequently,
$$
\chi_c(X)
=
\sup\left\{
\chi_c(Y):
Y\text{ is a finite subgraph of }X
\right\}.
$$
\end{lemma}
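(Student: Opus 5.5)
The plan is to prove the nontrivial direction (finite subgraphs colorable implies $X$ colorable) by a compactness argument, in the spirit of the de Bruijn--Erd\H{o}s theorem. The forward direction is immediate, since restricting a circular $M$-coloring of $X$ to any subgraph is a circular $M$-coloring of it. Here a circular $M$-coloring means a map $f:V\to\R/\Z$ with $\|f(u)-f(v)\|\geq 1/M$ for every edge $\{u,v\}$. The point is that this condition is closed, because it uses a non-strict inequality, and $\R/\Z$ is compact.

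We consider the product space $\Omega:=(\R/\Z)^V$ with the product topology; by Tychonoff's theorem it is compact. For each edge $e=\{u,v\}$ let
$$
F_e:=\{f\in\Omega:\ \|f(u)-f(v)\|\geq 1/M\}.
$$
Each $F_e$ is closed, since it is the preimage of the closed set $[1/M,1/2]$ under the continuous map $f\mapsto\|f(u)-f(v)\|$, which depends only on two coordinates.

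Given finitely many edges $e_1,\ldots,e_k$, let $Y$ be the finite subgraph they span. By hypothesis $Y$ has a circular $M$-coloring $g$. Extending $g$ arbitrarily, say by $0$, to all of $V$ yields a point of $F_{e_1}\cap\cdots\cap F_{e_k}$. Hence the family $\{F_e\}_{e\in E}$ has the finite intersection property. By compactness, $\bigcap_{e\in E}F_e\neq\varnothing$, and any point in this intersection is a circular $M$-coloring of $X$. If $E=\varnothing$ the statement is trivial.

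For the formula, monotonicity gives $\chi_c(X)\geq\sigma:=\sup_Y\chi_c(Y)$. For the reverse inequality, suppose $\sigma<\infty$. For finite graphs the infimum in the definition of $\chi_c$ is attained, since the set of valid $f$ on finitely many vertices is compact. So every finite subgraph $Y$ admits a circular $\chi_c(Y)$-coloring, and hence also a circular $\sigma$-coloring, because the condition $\|\cdot\|\geq 1/s$ weakens as $s$ grows. By the first part, $X$ admits a circular $\sigma$-coloring, so $\chi_c(X)\leq\sigma$.

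The one delicate point is the attainment of the infimum for finite $Y$. If we prefer to avoid it, we can instead apply the first part with $M=\sigma+\epsilon$ for every $\epsilon>0$, which gives $\chi_c(X)\leq\sigma+\epsilon$ for all $\epsilon>0$, and let $\epsilon\to 0$. This avoids the issue entirely, so no real obstacle remains beyond invoking Tychonoff's theorem, which relies on the axiom of choice.
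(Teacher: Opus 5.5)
Your proposal is correct and follows the paper's proof essentially step for step: Tychonoff compactness of $(\R/\Z)^{V}$, the closed sets $F_{uv}$, and the finite intersection property obtained by extending colorings of finite subgraphs arbitrarily. For the formula $\chi_c(X)=\sup_Y\chi_c(Y)$ you are slightly more careful than the paper, which deduces $\chi_c(X)\leq s$ directly from the first part without noting that $\chi_c(Y)$ is an infimum; your $\epsilon$-argument (or, alternatively, attainment of the infimum for finite graphs) supplies that small step.
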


\begin{proof}
The forward implication is immediate. Conversely, let
$$
\mathbb T:=\mathbb R/\mathbb Z.
$$
Since $\mathbb T$ is compact, Tychonoff's theorem~\cite{MunkresTopology}
implies that the product space
$$
\mathbb T^{V(X)}
=
\prod_{v\in V(X)}\mathbb T
$$
is compact. For every edge $uv\in E(X)$, define
$$
F_{uv}
:=
\left\{
f\in\mathbb T^{V(X)}:
\|f(u)-f(v)\|\geq\frac1M
\right\},
$$
where $\|x\|$ denotes the circular distance from $x\in\mathbb T$ to $0$. Since the map
$
f\longmapsto\|f(u)-f(v)\|
$
is continuous, 
the set $F_{uv}$ is closed.

We claim that the family $(F_{uv})_{uv\in E(X)}$
has the \emph{finite-intersection property}, i.e., all finite intersections are non-empty. Indeed, let
$u_1v_1,\ldots,u_mv_m$ be finitely many edges of $X$. The graph formed by these edges and their
endpoints is finite and therefore admits a circular $M$-coloring by
assumption. Extending this coloring arbitrarily to the remaining vertices of
$X$ gives an element of
$
\bigcap_{i=1}^m F_{u_iv_i}.
$

Since $\mathbb T^{V(X)}$ is compact, the finite-intersection property for
closed sets gives
$$
\bigcap_{uv\in E(X)}F_{uv}\neq\varnothing.
$$
Every element of this intersection is a circular $M$-coloring of $X$.

It remains to prove the equality concerning the circular chromatic number.
Set
$$
s := \sup\left\{ \chi_c(Y): Y\text{ is a finite subgraph of }X \right\}.
$$
By the first part of the lemma, we have that $\chi_c(X)\leq s$. Since circular chromatic number is monotone under taking subgraphs, we have
$
s\leq\chi_c(X).
$
\end{proof}

\begin{lemma}
\label{thm:circular-locality-fixed-family}
Let $G$ be a group and let $C$ be a well-ordered semiminimal  family of $G$. Then
$$
\chi_c(\Cay(G,C))
=
\sup\left\{
\chi_c\bigl(\Cay(\langle D\rangle,D)\bigr):
D\text{ is a finite subfamily of }C
\right\}.
$$
\end{lemma}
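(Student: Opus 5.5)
The plan is to prove the two inequalities separately, using \Cref{lem:circular-compactness} for the nontrivial direction. Write $X:=\Cay(G,C)$ and, for a finite subfamily $D$ of $C$, $X_D:=\Cay(\langle D\rangle,D)$. Set $s$ for the supremum on the right-hand side.

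For $\chi_c(X)\geq s$, fix a finite subfamily $D$. The graph $\Cay(\langle D\rangle,D)$ is an induced subgraph of $\Cay(G,D)$ on the vertex set $\langle D\rangle$. Since $D\subseteq C$, the graph $\Cay(G,D)$ is a spanning subgraph of $X$. Monotonicity of $\chi_c$ under subgraphs then gives $\chi_c(X_D)\leq\chi_c(X)$, and taking the supremum gives $s\leq\chi_c(X)$. No semiminimality is needed here.

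For $\chi_c(X)\leq s$, I will use \Cref{lem:circular-compactness}: it suffices to show $\chi_c(Y)\leq s$ for every finite subgraph $Y$ of $X$. Let $Y$ be such a subgraph. Each edge $\{a,b\}$ of $Y$ satisfies $a^{-1}b\in\{c,c^{-1}\}$ for some $c\in C$. Choose one such $c$ for every edge and let $D$ be the finite subfamily of $C$ formed by these elements, with the inherited order. Then $Y$ is a subgraph of $\Cay(G,D)$. Every connected component of $\Cay(G,D)$ is a left translate $g\langle D\rangle$, and left multiplication by $g^{-1}$ is a graph isomorphism onto $X_D$ because adjacency depends only on $a^{-1}b$. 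Hence $\chi_c(\Cay(G,D))=\chi_c(X_D)\leq s$. This uses that $\chi_c$ of a disjoint union is the supremum over components, which is immediate since one can color each component separately. Monotonicity then gives $\chi_c(Y)\leq s$.

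The one point needing care is the infimum in the definition of $\chi_c$: the lemma is stated for circular $M$-colorings with a fixed $M$. So I will apply it for every $M>s$. For such $M$, each finite $Y$ has $\chi_c(Y)<M$, so $Y$ admits a circular $M$-coloring, since the admissible values form an up-closed set. Hence $X$ admits a circular $M$-coloring, so $\chi_c(X)\leq M$, and letting $M\downarrow s$ finishes the argument. If $s=\infty$ there is nothing to prove.

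I expect no real obstacle. The only subtle point is to make sure the finite subfamily $D$ is genuinely a subfamily of $C$, which holds by construction. By \Cref{obs:semimin}(i), $D$ is semiminimal, although that is not needed for the equality itself; it matters only for later applications of \Cref{thm:abelian-circular} and \Cref{thm:semiminimal-Dih-circular} to each $X_D$.
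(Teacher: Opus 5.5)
Your proof is correct and follows the paper's argument essentially step for step. The easy inequality comes from subgraph monotonicity. For the reverse inequality, you collect the finitely many edge labels of a finite subgraph into a finite subfamily $D$, translate each coset of $\langle D\rangle$ onto $\Cay(\langle D\rangle,D)$, and invoke \Cref{lem:circular-compactness}. Your explicit pass through circular $M$-colorings for every $M>s$ just spells out the infimum step that the paper absorbs into the ``consequently'' clause of that lemma.
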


\begin{proof}
Let
$
s
:=
\sup\left\{
\chi_c\bigl(\Cay(\langle D\rangle,D)\bigr):
D\text{ is a finite subfamily of }C
\right\}.
$
For every finite subfamily $D$ of $C$, clearly $
s\leq\chi_c(\Cay(G,C))
$ holds.

For the reverse inequality, let $X$ be a finite subgraph of $\Cay(G,C)$.
Only finitely many elements of $C$ occur as labels of edges of $X$. Let
$
D=(d_1,\ldots,d_m)
$
be the corresponding finite subfamily of $C$, with the order inherited from
$C$, and set
$
L:=\langle D\rangle
$.

Every connected component of $X$ is contained in a single left coset
of $L$: if a component contains a vertex $x$, all of its vertices lie
in $xL$.

Left multiplication by $x^{-1}$ maps the part of $X$ contained in $xL$
isomorphically to a subgraph of
$
\Cay(L,D)
$.
It follows that each connected component of $X$ has circular chromatic
number at most
$
\chi_c(\Cay(L,D))\leq s
$. Hence, $
\chi_c(X)\leq s$ and \Cref{lem:circular-compactness} gives
$$
\chi_c(\Cay(G,C))
=
\sup\left\{
\chi_c(X):
X\text{ is a finite subgraph of }\Cay(G,C)
\right\}
\leq s.
$$
\end{proof}

\begin{theorem}
\label{cor:arbitrary-abelian-circular}
Let $A$ be an abelian group and $
C=(c_\alpha)_{\alpha<\iota}
$
 a well-ordered semiminimal  family. If $r := r(C)$ denotes the order of $C$, then
$$
\chi_c\bigl(\Cay(A,C)\bigr)
\leq
\begin{cases}
2,&r=\infty,\\[2mm]
\dfrac{2r}{r-1},&r<\infty.
\end{cases}
$$
\end{theorem}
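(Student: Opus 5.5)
The plan is to reduce to the finite case via \Cref{thm:circular-locality-fixed-family} and then apply \Cref{thm:abelian-circular} to each finite subfamily. By that lemma,
$$
\chi_c\bigl(\Cay(A,C)\bigr)=\sup\left\{\chi_c\bigl(\Cay(\langle D\rangle,D)\bigr): D\text{ a finite subfamily of }C\right\},
$$
so it suffices to show that for every finite subfamily $D=(d_1,\ldots,d_m)$ of $C$, with the inherited order, $\chi_c(\Cay(\langle D\rangle,D))$ is at most $2$ if $r=\infty$ and at most $2r/(r-1)$ if $r<\infty$. Note that the bound function $r\mapsto 2r/(r-1)$ (with value $2$ at $\infty$) is nonincreasing in $r\in\{2,3,\ldots\}\cup\{\infty\}$. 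It is therefore enough to prove $r(D)\geq r(C)$ for every finite subfamily $D$. Then \Cref{thm:abelian-circular}, applied to the abelian group $\langle D\rangle$ and the family $D$, gives the bound corresponding to $r(D)$, which is at most the bound corresponding to $r(C)$.

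The key step is the comparison of relative orders. Let $d_i=c_{\alpha_i}$ with $\alpha_1<\cdots<\alpha_m$. The relative order of $d_i$ in $D$ is the order of $d_i$ modulo $K_i:=\langle d_1,\ldots,d_{i-1}\rangle$. The relative order $r_{\alpha_i}$ in $C$ is its order modulo $H_{\alpha_i}=\langle c_\beta:\beta<\alpha_i\rangle$. Since $K_i\subseteq H_{\alpha_i}$, there is a natural surjection $A/K_i\to A/H_{\alpha_i}$ mapping $d_i+K_i$ to $d_i+H_{\alpha_i}$. Orders can only decrease under homomorphisms, so $\operatorname{ord}_{A/K_i}(d_i+K_i)\geq r_{\alpha_i}\geq r(C)$, with the obvious conventions when orders are infinite. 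Taking the minimum over $i$ gives $r(D)\geq r(C)$. This also shows $D$ is semiminimal, as required by \Cref{thm:abelian-circular}, since $r(D)\geq r(C)\geq 2$ (and \Cref{obs:semimin}(i) gives this anyway).

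Finally, I combine the two steps. If $r(C)=\infty$, every finite $D$ has $r(D)=\infty$, so each term in the supremum is at most $2$. If $r(C)<\infty$, each $D$ has either $r(D)=\infty$, giving a bound of $2\le 2r/(r-1)$, or $r(D)\in\mathbb N$ with $r(D)\geq r$, giving $2r(D)/(r(D)-1)\le 2r/(r-1)$. Taking the supremum yields the claim.

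I expect no serious obstacle. The only care needed is the monotonicity of relative orders under passing to a subfamily (the direction of the inequality) and the handling of the infinite cases.
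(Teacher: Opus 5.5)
Your proposal is correct and follows essentially the same route as the paper. Both arguments show that every finite subfamily $D$ satisfies $r(D)\geq r(C)$, because the predecessors of each element in $D$ generate a subgroup of $H_\alpha$; both then combine the monotonicity of $r\mapsto 2r/(r-1)$ with \Cref{thm:abelian-circular} and the locality lemma \Cref{thm:circular-locality-fixed-family}.
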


\begin{proof}
 For every
$\alpha<\iota$, set
$
H_\alpha:=\langle c_\beta:\beta<\alpha\rangle,
$
and 
$
r_\alpha:=\operatorname{ord}_{A/H_\alpha}(c_\alpha+H_\alpha)$ the relative order of $c_\alpha$ in $C$. 
Let $D$ be a finite subfamily of $C$, with the order inherited from $C$.
For each $c_\alpha\in D$, the subgroup generated by its predecessors in $D$
is contained in $H_\alpha$. Consequently, the relative order of $c_\alpha$ modulo
the subgroup generated by its predecessors in $D$ is at least $r_\alpha$,
and hence at least $r$. Since $r\mapsto\frac{2r}{r-1}$ is non-increasing
for $r\geq2$, the inequality $r(D)\geq r$ yields the bound stated in
terms of $r$. The result follows from \Cref{thm:abelian-circular} and 
\cref{thm:circular-locality-fixed-family}.
\end{proof}

\begin{theorem}
\label{cor:arbitrary-dihedral-circular}
Let $A$ be an abelian group, and let $C=(c_\alpha)_{\alpha<\iota}$ be
a well-ordered semiminimal  family of $\Dih(A)$. Write
$c_\alpha=(a_\alpha,\varepsilon_\alpha)$ for each $\alpha<\iota$, and
let $s<\iota$ be the least index with $\varepsilon_s=1$. Define
$$
D:=(d_\alpha)_{\substack{\alpha<\iota\\ \alpha\neq s}},
\text{ where }
d_\alpha:=a_\alpha-\varepsilon_\alpha a_s.
$$
If $r:=r(D)$ denotes the order of $D$, then
$$
\chi_c(\Cay(\Dih(A),C))\leq
\begin{cases}
2, & \text{if } r=\infty,\\[1mm]
\dfrac{2r}{r-1}, & \text{if } r\in\mathbb N.
\end{cases}
$$
\end{theorem}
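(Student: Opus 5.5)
We reduce to the finitely generated case, \Cref{thm:semiminimal-Dih-circular}, via \Cref{thm:circular-locality-fixed-family}, exactly as in the proof of \Cref{cor:arbitrary-abelian-circular}. First, if $C$ contains no reflection then $\langle C\rangle$ lies in the rotation subgroup $A\times\{0\}\cong A$. In that case $\Cay(\Dih(A),C)$ is a disjoint union of copies of $\Cay(A,(a_\alpha))$, and \Cref{cor:arbitrary-abelian-circular} applies directly. We interpret $D$ as $(a_\alpha)$ in this degenerate case. So assume the least reflection index $s$ exists.

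By \Cref{thm:circular-locality-fixed-family}, it suffices to bound $\chi_c(\Cay(\langle F\rangle,F))$ for every finite subfamily $F$ of $C$. By monotonicity of $\chi_c$ under subgraphs, we may enlarge $F$ to contain $c_s$. This is harmless: $\Cay(\langle F\rangle,F)$ is a subgraph of $\Cay(\langle F\cup\{c_s\}\rangle,F\cup\{c_s\})$. Since $\Cay(\langle F\rangle,F)$ is a component of $\Cay(\Dih(A),F)$, we may then apply \Cref{thm:semiminimal-Dih-circular} to $F=(c_{\alpha_1},\dots,c_{\alpha_m})$ in the inherited order, which is a finite semiminimal family by \Cref{obs:semimin}(i). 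Because $s$ is the least reflection index in all of $C$ and $c_s\in F$, the first reflection of $F$ is again $c_s$. Hence the family the theorem associates to $F$ is precisely the finite subfamily $D_F=(d_{\alpha_j})_{\alpha_j\neq s}$ of $D$, with the same elements $d_\alpha=a_\alpha-\varepsilon_\alpha a_s$.

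The remaining point, and the only one needing care, is to compare orders: we need $r(D_F)\geq r(D)$. For $d_\alpha\in D_F$, the subgroup generated by its predecessors in $D_F$ is contained in $\langle d_\beta:\beta<\alpha,\beta\neq s\rangle$, the subgroup generated by its predecessors in $D$. Hence its relative order in $D_F$ is at least its relative order in $D$, which is at least $r$. So $r(D_F)\geq r$. If $r=\infty$, then $r(D_F)=\infty$ and we get the bound $2$. Otherwise $r(D_F)$ is either $\infty$, giving bound $2\le 2r/(r-1)$, or finite and at least $r$. In the finite case $r(D_F)\geq 2$, since $D_F$ is semiminimal by \Cref{lem:projected-semiminimal} applied after the normalization in \Cref{thm:semiminimal-Dih-circular}. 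Since $t\mapsto 2t/(t-1)$ is nonincreasing on $t\ge2$, we get the bound $2r/(r-1)$. Taking the supremum over $F$ via \Cref{thm:circular-locality-fixed-family} gives the claim.

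I expect no serious obstacle. The only subtle step is ensuring that the first reflection of the finite subfamily coincides with $c_s$, so that $D_F$ really is a subfamily of $D$. Forcing $c_s\in F$ handles this.
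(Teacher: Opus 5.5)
Your proposal is correct and is essentially the paper's argument. You adjoin $c_s$ to the finitely many relevant generators so the first reflection is unchanged and the associated abelian family is a subfamily of $D$, get $r(D_F)\geq r$ from the inclusion of predecessor subgroups, and conclude from \Cref{thm:semiminimal-Dih-circular} plus compactness; the differences are cosmetic, namely that you pass through \Cref{thm:circular-locality-fixed-family} rather than applying \Cref{lem:circular-compactness} to finite subgraphs directly, and that you also handle the reflection-free case, which the statement tacitly excludes.
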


\begin{proof}
Let $X$ be a finite subgraph of $\Cay(\Dih(A),C)$. Only finitely many elements of $C$ occur as edge labels in
$X$. Let $E\subseteq C$ be the corresponding finite subfamily, with
the order inherited from $C$, and set $E':=E\cup\{c_s\}$, 
again with the order inherited from $C$. Then by Observation \ref{obs:semimin} $E'$ is a finite
semiminimal generating family, and every connected component of $X$
is, up to left translation, a subgraph of $\Cay(\langle
E'\rangle,E')$. Hence
$$
\chi_c(X)\leq\chi_c(\Cay(\langle E'\rangle,E')).
$$

Apply the automorphism from \Cref{lem:normalize-reflection} sending
$c_s=(a_s,1)$ to $(0,1)$. By \Cref{lem:projected-semiminimal}, the
associated family in the abelian part is the finite subfamily
$$
D':=(d_\alpha)_{c_\alpha\in E',\ \alpha\neq s}.
$$
We claim that $r(D')\geq r$. For $\alpha\neq s$, set
$H_\alpha:=\langle d_\beta:\beta<\alpha,\ \beta\neq s\rangle$, so that,
as in \Cref{def:Hi(D) and r(D)},
$r_\alpha:=\operatorname{ord}_{A/H_\alpha}(d_\alpha+H_\alpha)$ is the
relative order of $d_\alpha$ in $D$, and $r=\min_\alpha r_\alpha$. For
every $\alpha$ with $c_\alpha\in E'\setminus\{c_s\}$, the predecessors
of $d_\alpha$ occurring in $D'$ — namely, those $d_\beta$ with
$\beta<\alpha$, $\beta\neq s$, and $c_\beta\in E'$ — form a subset of
$\{d_\beta:\beta<\alpha,\ \beta\neq s\}$, so the subgroup they
generate is contained in $H_\alpha$. Hence the relative order of
$d_\alpha$ in $D'$ is at least $r_\alpha\geq r$, and therefore
$r(D')\geq r$, as claimed.

The conclusion now follows from \Cref{thm:semiminimal-Dih-circular},
applied to the finite family $D'$, together with
\Cref{lem:circular-compactness}.
\end{proof}

\section{Virtually Abelian Groups}

Since $\chi(Y)= \lceil\chi_c(Y)\rceil$, \Cref{cor:arbitrary-abelian-circular} implies that for an abelian group $A$ and a semiminimal  family $C$, $\Cay(A,C)$ is $4$-colorable.  More precisely,

\begin{corollary}\label{cor:abelian}
Let $A$ be an abelian group and $
C=(c_\alpha)_{\alpha<\iota}
$
 a well-ordered semiminimal  family. If $r := r(C)$ denotes the order of $C$, then
\[ \chi(\Cay(A, C)) \leq  \left\{ \begin{array}{lll}  2 & {\rm if \ } r = \infty,  \\ 3 & {\rm if\ } r \geq 3, \\  4 & {\rm if\ } r = 2. \end{array} \right. \]
\end{corollary}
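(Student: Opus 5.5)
This follows directly from \Cref{cor:arbitrary-abelian-circular} together with the identity $\chi(X)=\lceil\chi_c(X)\rceil$ recalled in the preliminaries. The plan is to split into the three cases of the statement and, in each, take the ceiling of the circular bound. We also use that $r\geq 2$, which holds because $C$ is semiminimal (as observed after \Cref{def:Hi(D) and r(D)}).

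\emph{Case $r=\infty$.} Here \Cref{cor:arbitrary-abelian-circular} gives $\chi_c(\Cay(A,C))\leq 2$, hence $\chi(\Cay(A,C))\leq\lceil 2\rceil=2$.

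\emph{Case $r\in\mathbb N$.} Here the bound is $\chi_c\leq \frac{2r}{r-1}=2+\frac{2}{r-1}$.
\begin{itemize}
\item If $r=2$, this equals $4$, so $\chi\leq 4$.
\item If $r\geq 3$, then $\frac{2}{r-1}\leq 1$, so $\frac{2r}{r-1}\leq 3$ and $\chi\leq 3$.
\end{itemize}
Since the ceiling is monotone, $\chi=\lceil\chi_c\rceil\leq\lceil 2r/(r-1)\rceil$, which gives these values.

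There is no real obstacle. The only point worth checking is that the identity $\chi=\lceil\chi_c\rceil$ is legitimate for infinite graphs, since the index set $\iota$ may be infinite. For finite graphs this is the standard fact cited from \cite{Zhu01}. For infinite graphs it can be justified via \Cref{lem:circular-compactness}: circular colorings exist at the bound $M$ itself, so a circular $M$-coloring with $M\leq k$ an integer can be discretized into $k$ arcs to give a proper $k$-coloring. Alternatively, one applies the identity to finite subgraphs and invokes the de Bruijn--Erd\H{o}s theorem.
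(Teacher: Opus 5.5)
Your proposal is correct and matches the paper, which derives this corollary in one line by taking the ceiling of the bound in \Cref{cor:arbitrary-abelian-circular} via $\chi=\lceil\chi_c\rceil$. Your extra justification of that identity for possibly infinite graphs, either through \Cref{lem:circular-compactness} followed by discretizing into $k$ arcs or through de Bruijn--Erd\H{o}s applied to finite subgraphs, fills in a detail the paper leaves implicit.
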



We will use this to bound the chromatic number of a semiminimal Cayley graphs of $G$ by its abelian subgroups. 
The following is a useful tool:
Let $A\leq G$ and let $S\subseteq G$ such that $S \cap A = \varnothing$.
The \emph{Schreier (coset) graph} $\Sch(G/A,S)$ is the graph whose vertices are the left
cosets of $A$ in $G$, and where two distinct cosets $gA$ and $hA$ are adjacent if
there exists $s\in S  \cup S^{-1}$ and $t\in hA$ such that $ts\in gA$.
Equivalently, $gA$ and $hA$ are adjacent whenever some edge of $\Cay(G,S)$ joins a vertex of $gA$ to a vertex of $hA$.

\begin{lemma}
\label{prop:schreier reduction}
Let $G$ be a group, let $A\leq G$, and let $C$ be a family of elements of
$G$. 
Then 
$$
\chi\bigl(\Cay(G,C)\bigr)\leq\chi\bigl(\Cay(A,C \cap A)\bigr)\chi\bigl(\Sch(G/A,C\setminus A)\bigr)
,
$$
where the product on the right-hand side is cardinal multiplication.
\end{lemma}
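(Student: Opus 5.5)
The plan is to build a product coloring. Write $C_A := C\cap A$ and $C_{\bar A} := C\setminus A$. Fix a proper coloring $\psi$ of $\Sch(G/A,C_{\bar A})$ with a color set $K$ of cardinality $\chi(\Sch(G/A,C_{\bar A}))$. Fix also a proper coloring $\phi$ of $\Cay(A,C_A)$ with a color set $L$ of cardinality $\chi(\Cay(A,C_A))$. In general this needs the axiom of choice for infinite graphs, which is already used freely in the paper.

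Next, choose a transversal: for every left coset $gA$ pick a representative $t_{gA}\in gA$. Every $x\in G$ can then be written uniquely as $x=t_{xA}\,a_x$ with $a_x\in A$. Define the coloring
$$
\Phi:G\longrightarrow K\times L,\qquad \Phi(x):=\bigl(\psi(xA),\,\phi(a_x)\bigr).
$$
Since $|K\times L|$ is the cardinal product, it suffices to show that $\Phi$ is proper on $\Cay(G,C)$.

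Let $\{x,y\}$ be an edge, so $y=xc^{\pm1}$ for some $c\in C$. We split into two cases.

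\emph{Case 1: $c\in C_{\bar A}$.} Then $c^{\pm1}\notin A$, so $xA\neq yA$. By the definition of $\Sch(G/A,C_{\bar A})$ (some edge of $\Cay(G,C_{\bar A})$ joins $xA$ and $yA$), the two cosets are adjacent. Hence $\psi(xA)\neq\psi(yA)$.

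\emph{Case 2: $c\in C_A$.} Then $xA=yA$, so $x$ and $y$ share the representative $t=t_{xA}$. From $y=xc^{\pm1}=t\,a_x c^{\pm1}$ we read off $a_y=a_x c^{\pm1}$. Thus $a_x$ and $a_y$ are adjacent in $\Cay(A,C_A)$ (they are distinct because $x\neq y$). Hence $\phi(a_x)\neq\phi(a_y)$.

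In either case $\Phi(x)\neq\Phi(y)$, which proves the inequality.

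There is no real obstacle here. The only points needing care are that the Cayley graph is a right Cayley graph, so a generator in $A$ keeps us inside the left coset, which matches the convention for left cosets in the Schreier graph. Edges with $c$ outside $A$ never join two vertices of the same coset, since $x^{-1}y=c^{\pm1}\notin A$; this is why loops in the Schreier graph cannot occur and the Schreier coloring is well defined.
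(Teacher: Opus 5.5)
Your proposal is correct and follows essentially the same route as the paper: a product coloring that pairs a proper coloring of the Schreier graph (on the coset) with a proper coloring of $\Cay(A,C\cap A)$ transported to each coset via a chosen representative, with the same two-case check according to whether the generator lies in $A$. Your extra remark that generators outside $A$ never join two vertices of the same coset, so the Schreier graph has no loops, is a nice explicit touch that the paper leaves implicit.
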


\begin{proof}Set $\kappa:=\chi\bigl(\Cay(A,C\cap A)\bigr)$ and
$\lambda:=\chi\bigl(\Sch(G/A,C\setminus A)\bigr)$. Choose proper colorings $\eta:G/A\longrightarrow Q$ and $\mu:A\longrightarrow K$,
where $|Q|\leq\lambda$ and $|K|\leq\kappa$.

For every left coset $gA$, choose a representative $g$ and use the
isomorphism
$$
A\longrightarrow gA,
\qquad
a\longmapsto ga,
$$
to transfer the coloring $\mu$ to the subgraph of $\Cay(G,C)$ induced by
$gA$. Denote the resulting coloring by
$
\mu_{gA}:gA\longrightarrow K.$

Define the coloring $$ \begin{array} {ccclccc} \varphi: & G & \longrightarrow & Q\times K \\ & x & \mapsto & 
\varphi(x):=\bigl(\eta(xA),\mu_{xA}(x)\bigr).\end{array}
$$
We claim that $\varphi$ is proper. Let $x,y\in G$ be adjacent in
$\Cay(G,C)$. Then $y=xc$
for some $c\in C\cup C^{-1}$.

If $c\in A$, then $xA=yA$ and the edge $xy$ belongs to the subgraph
induced by this coset. Therefore $\mu_{xA}(x)\neq\mu_{xA}(y),$ and $\varphi(x)\neq\varphi(y)$.

If $c\notin A$, then $xA$ and $yA$ are distinct adjacent vertices of $\Sch(G/A,C\setminus A).$
Therefore $\eta(xA)\neq\eta(yA),$ and again $\varphi(x)\neq\varphi(y)$.

Thus $\varphi$ is a proper coloring of $\Cay(G,C)$ with color set
$Q\times K$. Consequently,
$$
\chi\bigl(\Cay(G,C)\bigr)
\leq
|Q\times K|
\leq
\lambda\kappa
=\chi\bigl(\Cay(A,C \cap A)\bigr)\chi\bigl(\Sch(G/A,C\setminus A)\bigr).
$$
\end{proof}

From ~\Cref{cor:abelian} and  \Cref{prop:schreier reduction}, the following result is straightforward.

\begin{corollary}
\label{cor:easy 4k bound for non-Ab}
If $G$ contains an abelian subgroup $A$ of index $k$ and $C$ is a semiminimal  family of $G$, then
\[
\chi(\Cay(G,C)) \leq 4\,\chi(\Sch(G/A,C\setminus A)) \leq 4k.
\]
\end{corollary}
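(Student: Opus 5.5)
The plan is to apply \Cref{prop:schreier reduction} with the given abelian subgroup $A$ of index $k$. This gives
\[
\chi(\Cay(G,C))\leq\chi\bigl(\Cay(A,C\cap A)\bigr)\,\chi\bigl(\Sch(G/A,C\setminus A)\bigr),
\]
so two things remain. First, bound the first factor by $4$. Second, bound the Schreier graph factor by $k$.

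For the first factor, I would regard $C\cap A$ as a family of elements of $A$, with the well-order inherited from $C$. I would check that it is a semiminimal family of $A$. For $c_\alpha\in C\cap A$, the subgroup generated by its predecessors inside $C\cap A$ is contained in $\langle c_\beta:\beta<\alpha\rangle$. Since $c_\alpha$ is not in the larger subgroup, it is not in the smaller one. This is the same inheritance argument as \Cref{obs:semimin}(i), but for arbitrary (not necessarily finite) subfamilies.

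The connection set $C\cap A$ need not generate $A$. This causes no difficulty, because \Cref{cor:abelian}, via \Cref{cor:arbitrary-abelian-circular}, only requires a semiminimal family, not a generating one. Its order $r$ satisfies $r\geq 2$ when the family is nonempty. If $C\cap A=\varnothing$, then $\Cay(A,\varnothing)$ is edgeless and has chromatic number $1$. In every case \Cref{cor:abelian} yields $\chi(\Cay(A,C\cap A))\leq 4$, which proves the first inequality.

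For the second factor, $\Sch(G/A,C\setminus A)$ has exactly $k$ vertices, the left cosets of $A$. It has no loops: two cosets are adjacent only if they are distinct, by the definition in the paper. Coloring each coset with its own color is therefore proper, so its chromatic number is at most $k$. This gives the second inequality $4\,\chi(\Sch(G/A,C\setminus A))\leq 4k$.

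There is no substantive obstacle. The only point needing care is the semiminimality of $C\cap A$ inside $A$, together with the degenerate empty case; the rest is direct substitution into \Cref{prop:schreier reduction}.
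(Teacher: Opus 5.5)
Your proposal is correct and follows essentially the same route as the paper, which obtains the corollary directly from \Cref{prop:schreier reduction} combined with \Cref{cor:abelian}. The details you supply are exactly the routine checks the paper leaves implicit: $C\cap A$ inherits semiminimality from $C$, the empty case is trivial, and the Schreier graph has only $k$ vertices and no loops.
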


Recall that a class of finite groups has uniformly bounded \emph{Jordan constant} if and only if its members contain abelian subgroups of uniformly bounded index; see~\cite{MundetTurull15}. Further, by Jordan's theorem, for every $n$ there exists a constant $J(n)$ such
that every finite subgroup $G\leq \operatorname{GL}_n(\mathbb C)$ contains
an abelian normal subgroup of index at most $J(n)$; see~\cite{Col07}.
A concrete application of \Cref{cor:easy 4k bound for non-Ab} arises in semidirect products of abelian groups, as the following result shows.

\begin{proposition}
\label{thm:abelian-by-abelian-action-bound}
Let $A$ and $H$ be abelian groups, and let
$$
G=A\rtimes_{\alpha} H
$$
be a semidirect product, where $\alpha\colon H\to \operatorname{Aut}(A)$ denotes the action. For every  semiminimal family $C$ of $G$, we have
$$
\chi(\operatorname{Cay}(G,C))\le 4|\alpha(H)|.
$$
    
\end{proposition}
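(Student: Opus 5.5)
The plan is to find an abelian subgroup of $G$ whose index is $|\alpha(H)|$ and then apply \Cref{cor:easy 4k bound for non-Ab}. Set $K:=\ker\alpha\leq H$. Then $K$ is normal in $H$ and the quotient $H/K\cong\alpha(H)$ has order $|\alpha(H)|$.

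The candidate subgroup is
$$
N:=A\rtimes_\alpha K=\{(a,h)\in G: h\in K\}\subseteq G.
$$
First I check that $N$ is an abelian subgroup. Every element of $K$ acts trivially on $A$, so the multiplication on $N$ is
$$
(a,h)(b,k)=(a+\alpha(h)(b),h+k)=(a+b,h+k).
$$
This is exactly the multiplication of the direct product $A\times K$, and both factors are abelian, so $N$ is abelian.

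Next I compute the index. The projection $\pi:G\to H$, $(a,h)\mapsto h$, is a surjective homomorphism. The subgroup $N$ is the full preimage $\pi^{-1}(K)$, so
$$
[G:N]=[H:K]=|\alpha(H)|.
$$
In particular $N$ is normal in $G$. None of this needs $A$, $H$ or $\alpha(H)$ to be finite. If $\alpha(H)$ is infinite, the bound is cardinal and the same reasoning still applies.

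Finally I apply \Cref{cor:easy 4k bound for non-Ab} with the abelian subgroup $N$ of index $k=|\alpha(H)|$. This gives
$$
\chi(\Cay(G,C))\leq 4|\alpha(H)|.
$$
The factor $4$ requires $C\cap N$ to be handled by \Cref{cor:abelian}, which is stated for well-ordered semiminimal families of an abelian group. The family $C\cap N$, with the order inherited from $C$, need not be semiminimal in $N$: an element may lie in the subgroup generated by elements of $C$ outside $N$ without lying in the subgroup generated by its predecessors in $N$. \Cref{cor:easy 4k bound for non-Ab} is stated as already covering this case, so I take it as given. The other bound used there, $\chi(\Sch(G/N,C\setminus N))\leq[G:N]$, is trivial because the Schreier graph has $[G:N]$ vertices.

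The main obstacle is this semiminimality of $C\cap N$ inside $N$, and it would have to be addressed if the corollary's hypotheses were questioned. A priori I would expect only that the relative orders within $N$ are at least $2$, and that does not follow directly. My first fallback would be to exploit normality of $N$ and work with the induced family more carefully. Given that the corollary is available in the paper, the proof reduces to the construction of $N$ above.
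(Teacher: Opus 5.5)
Your argument is the paper's proof: $A\rtimes_\alpha\ker\alpha\cong A\times\ker\alpha$ is an abelian subgroup of index $[H:\ker\alpha]=|\alpha(H)|$, and \Cref{cor:easy 4k bound for non-Ab} finishes. The obstacle you raise is not real, and your claim that $C\cap N$ need not be semiminimal in $N$ has an inclusion backwards: for $c_\alpha\in C\cap N$, the subgroup generated by its predecessors in $C\cap N$ is contained in $\langle c_\beta:\beta<\alpha\rangle$, which does not contain $c_\alpha$, so every subfamily of a semiminimal family (with the inherited order) is again semiminimal, i.e.\ relative orders can only increase when you pass to a subfamily, and $r(C\cap N)\geq r(C)\geq 2$.
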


\begin{proof}
Let $N=\ker(\alpha)\le H$. Since $H$ is abelian, $N$ is a normal subgroup of $H$. Hence $K:=A\rtimes N$
is a normal subgroup of $G=A\rtimes_{\alpha}H$. Then, $G/K\cong H/N\cong \alpha(H),$
so $[G:K]=|\alpha(H)|.$ 

Moreover, every element of $N$ acts trivially on $A$. Therefore
$K\cong A\times N$ is abelian.

The result now follows from~\Cref{cor:easy 4k bound for non-Ab}.
\end{proof}

\section{Remarks}

It remains open whether semiminimal Cayley graphs have bounded
chromatic number. We have shown that semiminimal Cayley graphs of
abelian groups and of dihedral groups are always $4$-colorable. More
generally, by \Cref{cor:easy 4k bound for non-Ab}, semiminimal Cayley
graphs of any group with an abelian subgroup of bounded index have
bounded chromatic number. The most concrete instance of this concerns
groups with an abelian subgroup of index at most $2$, for which
\Cref{cor:easy 4k bound for non-Ab} gives $\chi(\Cay(G,C))\le 8$:
\begin{itemize}
    \item \emph{Dedekind groups.} By Dedekind's theorem, a  non-abelian Dedekind group is isomorphic to $Q_8\times A$ for
    some abelian group $A$. Although semiminimal Cayley graphs of both $Q_8$ and of abelian groups are $4$-colorable, we do not know a better bound than $8$ for Dedekind groups in  general.
    \item \emph{Generalized quaternion groups.} The generalized
    quaternion group $Q_{32}$ already has chromatic number $7$ for
    some semiminimal generating family, see~\cite[Proposition
    3.1]{GMK25}.
    \item \emph{Semidirect products $A\rtimes C_2$.} A better bound
    is known only for generalized dihedral groups, where $C_2$ acts
    by inversion; for other actions we do not know a bound better
    than $8$.
\end{itemize}

With the help of Rhys Evans, we have verified computationally that no
semiminimal Cayley graph of a group of order at most $127$ has
chromatic number greater than $7$.

By Observation \ref{obs:semimin}, every semiminimal generating family of a
subgroup $H\le G$ extends to a semiminimal generating family of $G$.
Consequently, if every semiminimal Cayley graph of $G$ has bounded
chromatic number, then so does every semiminimal Cayley graph of
$H$. Since, by Cayley's theorem, every finite group embeds as a
subgroup of some symmetric group, it would therefore suffice to
settle Babai's question for symmetric groups.

Finally, for a bound in terms of the order of $G$: no semiminimal
Cayley graph of a group $G$ contains $K_{5,17}$ as a
subgraph~\cite{B78}, and the maximum degree satisfies $\Delta\leq
2\log n$ when $|G|=n$~\cite{GMK25}. Combining these two facts
with~\cite{ABD23} yields
$$
\chi(\Cay(G,C))\leq\left(\frac{2}{\log 2}+o(1)\right)\frac{\log
n}{\log\log n}
$$
for any semiminimal  family $C$.

\subsection*{Acknowledgements}
\sloppy
GM was supported by Horizon Europe ERC Grant number: 101045750 / Project acronym: HodgeGeoComb. IGM and KK were supported through grant 
PID2022-137283NB-C22 funded by MICIU/AEI/10.13039/501100011033 by ERDF/EU. KK was also supported  through the Severo Ochoa and Mar\'ia de Maeztu Program for Centers and Units of Excellence in R\&D (CEX2020-001084-M) and ANR-21-CE48-0012.

\bibliographystyle{abbrvurl}

\bibliography{bibliography}

@Article{GMK25,
 Author = {Garc{\'{\i}}a-Marco, Ignacio and Knauer, Kolja},
 Title = {Coloring minimal {Cayley} graphs},
 FJournal = {European Journal of Combinatorics},
 Journal = {European J. Combin.},
 ISSN = {0195-6698},
 Volume = {125},
 Pages = {9},
 Note = {Id/No 104108},
 Year = {2025},
 Language = {English},
 DOI = {10.1016/j.ejc.2024.104108},
 zbMATH = {7968035}
}

@article{barajas2009chromatic,
  title={On the chromatic number of circulant graphs},
  author={Barajas, Javier and Serra, Oriol},
  journal={Discrete mathematics},
  volume={309},
  number={18},
  pages={5687--5696},
  year={2009},
  publisher={Elsevier}
}

@misc{Rhys,
  title = {Datasets of Highly Symmetric Objects},
  howpublished = {\url{https://graphsym.net/}},
  note = {Accessed: April 17, 2025},
  author = {Potočnik, Primož and Potočnik, Gregor and Evans, Rhys},
  year = {2024}
}

@Misc{B78,
 Author = {Laszlo {Babai}},
 Title = {{Chromatic number and subgraphs of {C}ayley graphs}},
 Language = {English},
 HowPublished = {{Theor. Appl. Graphs, Proc. Kalamazoo 1976, Lect. Notes Math. 642, 10-22 (1978).}},
 MSC2010 = {05C25 05C15},
 Zbl = {0382.05031}
}

@Misc{DHY24,
 Author = {Davies, James and Hatzel, Meike and Yepremyan, Liana},
 Title = {Counterexample to {Babai}'s lonely colour conjecture},
 Year = {2024},
 HowPublished = {Preprint, {arXiv}:2410.05199 [math.{CO}] (2024)},
 URL = {https://arxiv.org/abs/2410.05199},
 arXiv = {arXiv:2410.05199}
}

@Article{GMK22,
 Author = {Garc{\'{\i}}a-Marco, Ignacio and Knauer, Kolja},
 Title = {On sensitivity in bipartite {Cayley} graphs},
 FJournal = {Journal of Combinatorial Theory. Series B},
 Journal = {J. Comb. Theory, Ser. B},
 ISSN = {0095-8956},
 Volume = {154},
 Pages = {211--238},
 Year = {2022},
 Language = {English},
 DOI = {10.1016/j.jctb.2022.01.002},
 zbMATH = {7483272},
 Zbl = {1483.05078}
}

@article{KS25,
 author = {Knauer, Kolja and Soto G{\'o}mez, {\'A}lvaro},
 title = {What is and is not inside a {Cayley} graph?},
 fjournal = {DML. Discrete Mathematics Letters},
 journal = {DML, Discrete Math. Lett.},
 issn = {2664-2557},
 volume = {16},
 pages = {67--72},
 year = {2025},
 language = {English},
 doi = {10.47443/dml.2025.111},
 zbMATH = {8118333}
}

@Article{God81,
 Author = {Godsil, C. D.},
 Title = {Connectivity of minimal {Cayley} graphs},
 FJournal = {Archiv der Mathematik},
 Journal = {Arch. Math.},
 ISSN = {0003-889X},
 Volume = {37},
 Pages = {473--476},
 Year = {1981},
 Language = {English},
 DOI = {10.1007/BF01234384},
 zbMATH = {3747168},
 Zbl = {0476.05049}
}

@Article{Spe83,
 Author = {Spencer, Joel},
 Title = {What's not inside a {Cayley} graph},
 FJournal = {Combinatorica},
 Journal = {Combinatorica},
 ISSN = {0209-9683},
 Volume = {3},
 Pages = {239--241},
 Year = {1983},
 Language = {English},
 DOI = {10.1007/BF02579297},
 zbMATH = {3825843},
 Zbl = {0522.05032}
}

@InCollection{B95,
 Author = {L\'aszl\'o {Babai}},
 Title = {{Automorphism groups, isomorphism, reconstruction}},
 BookTitle = {{Handbook of combinatorics. Vol. 1-2}},
 ISBN = {0-444-88002-X/set; 0-444-82346-8/vol1; 0-444-82351-4/vol2; 0-262-07169-X/set; 0-262-07170-3/vol1; 0-262-07171-1/vol2},
 Pages = {1447--1540},
 Year = {1995},
 Publisher = {Amsterdam: Elsevier (North-Holland); Cambridge, MA: MIT Press},
 Language = {English},
 MSC2010 = {05C25 05C60 05C10},
 Zbl = {0846.05042}
}

@article{Vin88,
 author = {Vince, A.},
 title = {Star chromatic number},
 fjournal = {Journal of Graph Theory},
 journal = {J. Graph Theory},
 issn = {0364-9024},
 volume = {12},
 number = {4},
 pages = {551--559},
 year = {1988},
 language = {English},
 doi = {10.1002/jgt.3190120411},
 zbMATH = {4075092},
 Zbl = {0658.05028}
}

@article{MundetTurull15,
  author  = {Mundet i Riera, Ignasi and Turull, Alexandre},
  title   = {Boosting an analogue of Jordan's theorem for finite groups},
  journal = {Advances in Mathematics},
  volume  = {272},
  year    = {2015},
  pages   = {820--836},
  doi     = {10.1016/j.aim.2014.12.021}
}

@article{Zhu01,
 author = {Zhu, Xuding},
 title = {Circular chromatic number: {A} survey},
 fjournal = {Discrete Mathematics},
 journal = {Discrete Math.},
 issn = {0012-365X},
 volume = {229},
 number = {1-3},
 pages = {371--410},
 year = {2001},
 language = {English},
 doi = {10.1016/S0012-365X(00)00217-X},
 zbMATH = {1591126},
 Zbl = {0973.05030}
}

@article{ABD23,
 author = {Anderson, James and Bernshteyn, Anton and Dhawan, Abhishek},
 title = {Colouring graphs with forbidden bipartite subgraphs},
 fjournal = {Combinatorics, Probability and Computing},
 journal = {Comb. Probab. Comput.},
 issn = {0963-5483},
 volume = {32},
 number = {1},
 pages = {45--67},
 year = {2023},
 language = {English},
 doi = {10.1017/S0963548322000104},
 zbMATH = {7671448},
 Zbl = {1511.05068}
}

@book{MunkresTopology,
  author    = {Munkres, James R.},
  title     = {Topology},
  edition   = {2},
  publisher = {Prentice Hall},
  year      = {2000}
}

@article{Col07,
  author  = {Collins, Michael J.},
  title   = {On Jordan's theorem for complex linear groups},
  journal = {Journal of Group Theory},
  volume  = {10},
  number  = {4},
  year    = {2007},
  pages   = {411--423},
  doi     = {10.1515/JGT.2007.032}
}

@article{deBruijnErdos1951,
  author  = {de Bruijn, N. G. and Erd{\H{o}}s, P.},
  title   = {A Colour Problem for Infinite Graphs and a Problem in the Theory of Relations},
  journal = {Nederl. Akad. Wetensch. Proc. Ser. A},
  volume  = {54},
  pages   = {371--373},
  year    = {1951},
  note    = {Also published in Indagationes Mathematicae, 13:371--373}
}

\end{document}